\documentclass[12pt]{amsart}

\usepackage{amssymb,latexsym}
\usepackage{amsmath,amsfonts,amsthm,amscd,amsxtra,enumerate}
\usepackage{mathtools}
\usepackage[all]{xy}
\usepackage{hyperref}
\pdfstringdefDisableCommands{

\let\noindent\empty
}
\usepackage{amsmath,latexsym,amssymb, enumerate, amsthm, epsfig }
\usepackage{bookmark}
\usepackage[margin=1in]{geometry}
\usepackage{epsfig}
\usepackage{graphicx}
\usepackage{enumerate}

\newcommand{\noi}{\noindent}
\newcommand{\be}{\begin{enumerate}}
	\newcommand{\ee}{\end{enumerate}}

\hbadness=99999
\vbadness=99999

\newtheorem{con}{Conjecture}[section]
\newtheorem{defn}[con]{Definition}

\newtheorem{thm}[con]{Theorem}
\newtheorem*{thm*}{Theorem}
\newtheorem{lemma}[con]{Lemma}
\newtheorem{remark}[con]{Remark}
\newtheorem{corollary}[con]{Corollary}
\newtheorem{proposition}[con]{Proposition}

\newtheorem*{con*}{Conjecture}
\AtBeginDocument{}
\begin{document}
\title{ Some results on the Ryser design conjecture-III}
\author[Tushar D. Parulekar]{Tushar D. Parulekar}
\address{Department of Mathematics, I.I.T. Bombay, Powai, Mumbai 400076.}
\email{tushar.p@math.iitb.ac.in}
\author[Sharad S. Sane]{Sharad S. Sane}
\address{Chennai Mathematical Institute, SIPCOT IT Park, Siruseri, Chennai 603103}
\email{ssane@cmi.ac.in}
\date{\today}

\subjclass[2010]{05B05;    51E05;     62K10}

\keywords{Ryser design}
\begin{abstract}
	A Ryser design $\mathcal{D}$ on $v$ points is a collection of $v$ proper subsets (called blocks) of a point-set with $v$ points such that every two blocks intersect each other in $\lambda$ points (and $\lambda < v$ is a fixed number) and there are at least two block sizes. A design $\mathcal{D}$ is called a symmetric design, if every point of $\mathcal{D}$ has the same replication number (or equivalently, all the blocks have the same size) and every two blocks intersect each other in $\lambda$ points. The only known construction of a Ryser design is via block complementation of a symmetric design. Such a Ryser design is called a Ryser design of  Type-1. This is the ground for the Ryser-Woodall conjecture: ``every Ryser design is of Type-1". This long standing conjecture has been shown to be valid in many situations.
	Let $\mathcal{D}$ denote a Ryser design of order $v$, index $\lambda$ and replication numbers $r_1,r_2$. Let $e_i$ denote the number of points of $\mathcal{D}$ with replication number $r_i$ (with $i = 1, 2$). Call a block $A$ of $\mathcal{D}$ small (respectively large) if $|A| < 2\lambda$ (respectively $|A| > 2\lambda$) and average if $|A|=2\lambda$. Let $D$ denote the integer $e_1 - r_2$ and let $\rho> 1$ denote the rational number $\dfrac{r_1-1}{r_2-1}$. Main results of the present article are the following: An equivalence relation on the set of Ryser designs is established. Some observations on the block complementation procedure of Ryser-Woodall are made. It is shown that a Ryser design with two block sizes one of which is an average block size is of Type-1. It is also shown that, under the assumption that large and small blocks do not coexist in any Ryser design equivalent to a given Ryser design, the given Ryser design must be of Type-1.
\end{abstract}
\maketitle
\section{Introduction}
\noindent
A design is a pair $(X,L)$, where $X$ is a finite set of points and $L\subseteq P(X)$, where $P(X)$ is the power set of $X$. The elements of $X$ are called its points and the members of $L$ are called the blocks. Most of the definitions, formulas and proofs of standard results used here can be found in \cite{Ion2}.
\begin{defn}{\rm
		A design $\mathcal{D}=(X,L)$ is said to be a symmetric $(v,k,\lambda)$ design  if
		\begin{enumerate}[{\rm 1.}]
			\item $ |X|=|L|=v$,
			\item $ |B_1\cap B_2|=\lambda \geq 1$ for all blocks $B_1$ and $B_2$ of $\mathcal{D},~~ B_1\neq B_2$,
			\item $ |B|=k>\lambda$ for all blocks $B$ of $\mathcal{D}$.
		\end{enumerate}
}\end{defn}
\newpage
\begin{defn}{\rm
		A design $\mathcal{D}=(X,L)$ is said to be a Ryser design of order $v$ and index $\lambda$ if it satisfies the following:
		\begin{enumerate}[{\rm 1.}]
			\item $|X|=|L|=v$,
			\item $ |B_1\cap B_2|=\lambda$ for all blocks $B_1$ and $B_2$ of $\mathcal{D}, B_1\neq B_2$,
			\item $ |B|>\lambda$ for all blocks $B$ of $\mathcal{D}$,
			\item there exist blocks $B_1$ and $B_2$ of $\mathcal{D}$ with $|B_1|\neq|B_2|$.
		\end{enumerate}
}\end{defn}
\noindent
Here condition 4 distinguishes a Ryser design from a symmetric design, and condition 3 disallows repeated blocks and also any block being contained in another block.\\
Woodall  \cite{wodl} introduced a type of combinatorial object which is a combinatorial dual of a Ryser design. All the known examples of Ryser designs can be described by the following construction 
which is also known as the Ryser-Woodall complementation.\\
Let $\mathcal{D}=(X,\mathcal{A})$ be a symmetric $(v,k,k-\lambda)$ design with $k\neq 2\lambda$. Let $A$ be a fixed block of $\mathcal{D}$.
Form the collection $\,\mathcal{B}=\{A\}\bigcup \{A\triangle B: B\in \mathcal{A}, B\neq A\}$, where $A\triangle B$ denotes the usual symmetric difference of $A$ and $B$. Then $\overline{\mathcal{D}}=(X,\mathcal{B})$ is a Ryser design of order $v$ and index $\lambda$ obtained from $\mathcal{D}$ by block complementation with respect to the block $A$. We denote $\overline{\mathcal{D}}$ by $\mathcal{D}*A$. 
Then $A$ is also a block of $\mathcal{D}*A$ and the original design $\mathcal{D}$ can be obtained by complementing $\mathcal{D}*A$ with respect to the block $A$. 
If $\mathcal{D}$ is a symmetric $(v,k,\lambda^{'})$ design, then the design obtained by complementing $\mathcal{D}$ with respect to some block is a Ryser design of order $v$ with index $\lambda=k-\lambda^{'}$. A Ryser design obtained in this way is said to be of \textbf{Type-1}.\\
Define a Ryser design to be of \textbf{Type-2} if it is not of Type-1. We now state
\begin{center}
	\textbf{The Ryser Design Conjecture \cite{Ion2}: Every Ryser design is of Type-1.}
\end{center}
\noindent
In a significant paper Singhi and Shrikhande \cite{Sin} proved the conjecture when the index $\lambda$ is a prime. In  \cite{Ser2} Seress showed the truthfulness of the conjecture for $\lambda=2p$, where $p$ is a prime. In \cite{Ion1} Ionin and Shrikhande  developed a new approach to the Ryser design conjecture that led to new results for certain parameter values. They also gave an alternative proof of the celebrated non-uniform Fisher Inequality. Ionin and Shrikhande went on to explore the validity of the Ryser design conjecture from a different perspective. Their results prove the conjecture for certain values of $v$ rather than for $\lambda$. Both Ryser and Woodall independently proved the following result:
\begin{thm}[{\cite[Theorem 14.1.2]{Ion2}}{ Ryser Woodall Theorem}]\label{thm:RyserWoodall}
	If $\mathcal{D}$ is a Ryser design of order $v$, then there exist integers $r_1$ and $r_2$, $r_1\neq r_2$ such that $r_1+r_2=v+1$ and any point occurs either in $r_1$ blocks or in $r_2$ blocks.
\end{thm}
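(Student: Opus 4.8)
The plan is to pass to the point‑by‑block incidence matrix $N$ of $\mathcal{D}$ --- the $v\times v$ $0/1$ matrix with $N_{p,B}=1$ exactly when $p\in B$ --- and to invert it explicitly. Two preliminary remarks set the stage. Writing $k_B=|B|$, condition~2 of the definition of a Ryser design gives $N^{T}N=\operatorname{diag}(k_B-\lambda)_{B}+\lambda J$, and since $k_B>\lambda$ for every block (condition~3) the diagonal part is positive definite; hence $N^{T}N$ is positive definite and $\det N\neq 0$. Also $\lambda\geq 1$, because $\lambda=0$ would make the blocks pairwise disjoint nonempty subsets of a $v$‑set, forcing them all to be singletons and contradicting condition~4. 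So $N$ is invertible.

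The heart of the argument is to compute $N^{-1}=(N^{T}N)^{-1}N^{T}$ by the Sherman--Morrison formula. Put $\Sigma=\sum_{B}(k_B-\lambda)^{-1}$, $\theta=1+\lambda\Sigma$, and, for a point $p$, $w_p=\sum_{B\ni p}(k_B-\lambda)^{-1}$. Then
\[
(N^{-1})_{B,p}=\frac{N_{p,B}-\lambda w_p/\theta}{k_B-\lambda}.
\]
Now equate the $(p,p)$ entry of $NN^{-1}=I$ to $1$: using $N_{p,B}^{2}=N_{p,B}$ and pulling out the common factor $\sum_{B\ni p}(k_B-\lambda)^{-1}=w_p$, the left side is $\bigl(1-\lambda w_p/\theta\bigr)w_p$, so
\[
\frac{\lambda}{\theta}\,w_p^{2}-w_p+1=0\qquad\text{for every point }p .
\]
As $\lambda,\theta>0$ this is an honest quadratic with coefficients independent of $p$, so $w_p$ takes at most two values. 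To transfer this to the replication number $r_p$, sum the formula for $(N^{-1})_{B,p}$ over all $p$ --- using $\sum_p w_p=\sum_B k_B(k_B-\lambda)^{-1}=v-1+\theta$ --- to evaluate $N^{-1}\mathbf 1$, and then the trivial identity $N(N^{-1}\mathbf 1)=\mathbf 1$ simplifies to $r_p=1+\frac{\lambda(v-1)}{\theta}\,w_p$. Since this coefficient is nonzero, $r_p$ is a fixed injective function of $w_p$, so it too takes at most two values.

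Finally I would check that both values occur and compute their sum. Counting incident pairs gives $\sum_{p\in B}r_p=|B|+(v-1)\lambda$ for every block $B$; were all $r_p$ equal to a common value $r$, this would force $|B|(r-1)=(v-1)\lambda$ and hence (since $\lambda\geq1$) a single block size, contrary to condition~4. So there are two replication numbers $r_1\neq r_2$. Letting $w',w''$ be the two roots of $\frac{\lambda}{\theta}x^{2}-x+1=0$, Vieta's formula gives $w'+w''=\theta/\lambda$, and therefore
\[
r_1+r_2=2+\frac{\lambda(v-1)}{\theta}\,(w'+w'')=2+(v-1)=v+1 .
\]
I expect the only real effort to be the bookkeeping in the Sherman--Morrison step and keeping the scalars $\Sigma$, $\theta$, $\sum_p w_p$ straight; the conceptual content is simply that the diagonal of $NN^{-1}=I$ forces one single quadratic on all the weighted degrees $w_p$. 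No genuine obstacle is anticipated, as the degenerate cases ($\lambda=0$, or the affine relation above collapsing to a constant) are ruled out by the standing hypotheses.
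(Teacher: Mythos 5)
Your argument is correct and is essentially the classical Ryser--Woodall proof that the paper cites from \cite[Theorem 14.1.2]{Ion2} without reproducing: invertibility of the incidence matrix via $N^{T}N=\operatorname{diag}(k_B-\lambda)+\lambda J$, explicit inversion, and the observation that every replication number satisfies one fixed quadratic, with Vieta giving $r_1+r_2=v+1$. All the bookkeeping checks out (in particular $\sum_p w_p=v-1+\theta$, the identity $r_p=1+\lambda(v-1)w_p/\theta$, and the use of condition~4 to rule out a single replication number), so there is nothing to add.
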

\noindent
  The point-set is partitioned into subsets $E_1$ and $E_2$, where $E_i$ is the set of points with replication number $r_i$ and let $e_i=|E_i|$ for $ i = 1, 2$. Then $e_1,e_2 > 0$ and $e_1 + e_2 = v$.
For a block $A$, let $\tau_i(A)$ denote $|E_i\cap A|$, the number of points of block $A$ with replication number $r_i$ for $i=1,2$.
Then $|A|=\tau_1(A)+\tau_2(A)$. \textit{We say a block $A$ is large, average or small if $|A|$ is greater than $2\lambda$, equal to $2\lambda$ or less than $2\lambda$ respectively.  A block which is not average is called a non average block.}  \\
The Ryser-Woodall complementation (or block complementation) of a Ryser design $\mathcal{D}$ of index $\lambda$ with respect to some block $\,A \in \mathcal{D}\,$ is either a symmetric design or a Ryser design of index $(|A|-\lambda)$. If $\mathcal{D}*A$ is the new Ryser design of index $(|A|-\lambda)$ obtained by Ryser-Woodall complementation of a Ryser design $\mathcal{D}$ with respect to the block $A$, we denote the new parameters of $\mathcal{\mathcal{D}*A}$ by $\lambda(\mathcal{D}*A), e_1(\mathcal{D}*A)$ etc.\\
Let $\mathcal{D}_r(X)$ denote the set of all incidence structures $\mathcal{D}=(X,\mathcal{B})$ where $\mathcal{B}$ is a set of subsets of $X$ and $\mathcal{D}$ is a Ryser design with replication numbers $r_1 \text{ and } r_2=v+1-r_1$; or a symmetric design with block size $r_1 \text{ or } r_2$.
\begin{proposition}[{\cite[Proposition 14.1.7]{Ion2}}] \label{prop:complement-properties}
	Let $\mathcal{D}\in \mathcal{D}_r(X)$ and let $A,B$ be blocks of 
	$\mathcal{D}$. Then $\mathcal{D}* A\in \mathcal{D}_r(X)$ and the following 
	conditions hold: 
	\begin{enumerate}[{\rm (i)}]
		\item $(\mathcal{D}*A)*A=\mathcal{D}$;
		\item $A\triangle B$ is a block of $\mathcal{D}*A$ and $(\mathcal{D}*A 
		)*(A\triangle B)=\mathcal{D}*(B)$;
		\item $r_1(\mathcal{D}*A)=r_1(\mathcal{D})$;
		\item $\lambda(\mathcal{D}*A)=|A|-\lambda(\mathcal{D})$; 
		\item $E_1(\mathcal{D}*A)=E_1(\mathcal{D})\triangle A$;
		\item 
		$e_1(\mathcal{D}*A)=e_1(\mathcal{D})-\tau_1(A)(\mathcal{D})+
		\tau_2(A)(\mathcal{D}
		)$;
		\item $\mathcal{D}*A$ is a symmetric design if and only if 
		$A=E_1(\mathcal{D})\text{ or } A=E_2(\mathcal{D})$. 
	\end{enumerate}
\end{proposition}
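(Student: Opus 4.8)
The plan is to derive all seven items, together with the membership $\mathcal{D}*A\in\mathcal{D}_r(X)$, from two elementary computations carried out directly from the definition of $\mathcal{D}*A$: the common intersection number of the blocks of $\mathcal{D}*A$, and the effect of the operation $*A$ on the replication number of a single point. Throughout write $\mathcal{D}=(X,\mathcal{B})$ and $\lambda=\lambda(\mathcal{D})$, and recall that the block set of $\mathcal{D}*A$ is $\{A\}\cup\{A\triangle B:B\in\mathcal{B},\,B\neq A\}$; since $S\mapsto A\triangle S$ is an involution of $P(X)$ and all blocks of $\mathcal{D}$ are nonempty, these are $v$ distinct subsets of $X$, and every block $B$ of $\mathcal{D}$ satisfies $|B|>\lambda$.

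For the \emph{intersection count}, given blocks $B,C\neq A$ I would split $X$ according to membership in $A$: a point of $A$ lies in $(A\triangle B)\cap(A\triangle C)$ exactly when it lies in $A\setminus(B\cup C)$, and a point outside $A$ lies there exactly when it lies in $(B\cap C)\setminus A$. Inclusion--exclusion then gives $|(A\triangle B)\cap(A\triangle C)|=\bigl(|A|-2\lambda+|A\cap B\cap C|\bigr)+\bigl(\lambda-|A\cap B\cap C|\bigr)=|A|-\lambda$, and likewise $|A\cap(A\triangle B)|=|A\setminus B|=|A|-\lambda$. Hence every two blocks of $\mathcal{D}*A$ meet in exactly $|A|-\lambda$ points, a positive constant since $|A|>\lambda$; this is (iv). For the \emph{replication count}, I would count the blocks of $\mathcal{D}*A$ through a point $x$: if $x\in A$ then $x$ lies in the block $A$ and in those blocks $A\triangle B$ with $x\notin B$, for a total of $1+(v-r_{\mathcal{D}}(x))=v+1-r_{\mathcal{D}}(x)$, where $r_{\mathcal{D}}(x)$ denotes the replication number of $x$ in $\mathcal{D}$; if $x\notin A$ then $x$ lies exactly in those blocks $A\triangle B$ with $x\in B$, so its replication number is unchanged. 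Since $r_1+r_2=v+1$, a point moves from $E_i$ to $E_{3-i}$ precisely when it belongs to $A$, which is exactly the assertion $E_1(\mathcal{D}*A)=E_1(\mathcal{D})\triangle A$ of (v); moreover the replication numbers of $\mathcal{D}*A$ all lie in $\{r_1,r_2\}$, giving (iii). Then (vi) follows at once from (v): $e_1(\mathcal{D}*A)=|E_1(\mathcal{D})\triangle A|=e_1(\mathcal{D})+|A|-2\tau_1(A)=e_1(\mathcal{D})-\tau_1(A)+\tau_2(A)$.

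Items (i) and (ii) are then formal consequences of $A\triangle(A\triangle S)=S$. For (i), complementing $\mathcal{D}*A$ again at $A$ fixes the block $A$ and replaces each other block $A\triangle B$ by $A\triangle(A\triangle B)=B$, recovering the block set $\mathcal{B}$ of $\mathcal{D}$. For (ii), assume $B\neq A$; then $A\triangle B$ is a block of $\mathcal{D}*A$ by construction, and complementing $\mathcal{D}*A$ at $A\triangle B$ sends $A\mapsto B$ and $A\triangle C\mapsto B\triangle C$ for every block $C\neq A$. Extracting from both sides the common block $A\triangle B=B\triangle A$, the resulting block set is exactly $\{B\}\cup\{B\triangle C:C\in\mathcal{B},\,C\neq B\}$, which is $\mathcal{D}*B$.

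What remains is $\mathcal{D}*A\in\mathcal{D}_r(X)$ and (vii), and this is where I expect the only genuine work. Let $N$ be the (square) incidence matrix of $\mathcal{D}*A$, so $N^{T}N=\operatorname{diag}(k_1,\dots,k_v)+(|A|-\lambda)(J-I)$ with $k_j$ the block sizes; note that every $k_j>|A|-\lambda$, since $|A\triangle B|=|A|+|B|-2\lambda>|A|-\lambda$ (using $|B|>\lambda$) and $|A|>|A|-\lambda$. The crux is the observation that $\mathcal{D}*A$ has constant block size if and only if it has constant replication number: if $N\mathbf 1=r\mathbf 1$, then equating the two expressions for $N^{T}N\mathbf 1=rN^{T}\mathbf 1$ forces $(r-1)k_j=(|A|-\lambda)(v-1)$ for every $j$; conversely, if $N^{T}\mathbf 1=k\mathbf 1$, then $N^{T}N=(k-|A|+\lambda)I+(|A|-\lambda)J$ is nonsingular (its eigenvalues $k-|A|+\lambda$ and $k-|A|+\lambda+(|A|-\lambda)v$ are positive), so $N$ is invertible, and equating the two expressions for $N^{T}(N\mathbf 1)$ forces $N\mathbf 1$ to be a scalar multiple of $\mathbf 1$. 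Granting this, $\mathcal{D}*A$ satisfies the axioms of a symmetric design precisely when its replication number is constant, equivalently, by (v), when $E_1(\mathcal{D})\triangle A\in\{\varnothing,X\}$, i.e. when $A=E_1(\mathcal{D})$ or $A=E_2(\mathcal{D})$, which is (vii); otherwise $\mathcal{D}*A$ has two replication numbers, necessarily $r_1$ and $r_2$, hence two block sizes, and, with the constant intersection $|A|-\lambda\geq1$ exceeded by every block size, this makes it a Ryser design. Either way $\mathcal{D}*A\in\mathcal{D}_r(X)$ with the same $r_1$. The last point to check is that the argument is uniform whether $\mathcal{D}$ is a Ryser design or a symmetric design: in the symmetric case $E_1(\mathcal{D})$ and $E_2(\mathcal{D})$ are $X$ and $\varnothing$, neither of which is a block, so (vii) then correctly says that $\mathcal{D}*A$ is always a Ryser design, matching the Ryser--Woodall construction.
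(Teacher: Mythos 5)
The paper does not prove this proposition; it is quoted verbatim from Ionin--Shrikhande \cite[Proposition 14.1.7]{Ion2}, so there is no in-paper argument to compare against. Your proof is correct and is essentially the standard one: the two direct counts (pairwise intersections equal to $|A|-\lambda$, and the replication number of a point flipping between $r_1$ and $r_2$ exactly when the point lies in $A$) give (iii)--(vi) and reduce everything else to the equivalence ``constant block size $\Leftrightarrow$ constant replication number,'' which you settle by the usual $N^{T}N$ computation. Your verification that every block size of $\mathcal{D}*A$ exceeds the new index $|A|-\lambda$ is the right way to see that the complemented structure really satisfies the design axioms, a point that is easy to skip.

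One remark worth making explicit: your derivation of (vii) passes from ``constant replication'' to ``$E_1(\mathcal{D})\triangle A\in\{\varnothing,X\}$,'' which silently uses $r_1\neq r_2$. This is not vacuous: for a symmetric $(4\mu-1,2\mu,\mu)$ design, block complementation again yields a symmetric design even though $A\neq E_1,E_2$, so (vii) would fail there. That case has $k=(v+1)/2$, i.e.\ $r_1=r_2$, and is excluded by the definition of $\mathcal{D}_r(X)$ (equivalently by the hypothesis $k\neq 2\lambda$ in the Ryser--Woodall construction); a sentence acknowledging this would make the proof of (vii) airtight. A second, purely cosmetic omission: to place a symmetric $\mathcal{D}*A$ in $\mathcal{D}_r(X)$ you also need its block size to be $r_1$ or $r_2$, which follows from $vk=\sum_x r(x)$.
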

\begin{remark}\label{remark:1}
	Since $|A\triangle B|=|A|+|B|-2|A\cap B|$,  observe that if a design is of Type-1 then it has all average blocks except for one, and hence a Type-2 Ryser design must have at least two non average blocks.   
\end{remark}
%
%

\noindent
Following Singhi and Shrikhande {\cite{Sin}} we define $\rho=\dfrac{r_1-1}{r_2-1}=\dfrac{c}{d}$, where $\gcd(c,d)=1.$ 
Let $g=\gcd(r_1-1,r_2-1). \text{ 
	Then } r_1+r_2=v+1 \text{ implies } g \text{ divides } (v-1)$,  
$~~r_1-1=cg,~ r_2-1=dg \text{ and } v-1=(c+d)g$.
We also write $a$ to denote $c - d$ and observe that any two of $c, d$ and $a$ are coprime. 
We use the following equations which can be found in \cite{Sin} and \cite{Ion1}.\\
In a Ryser design with block sizes $ k_1,k_2,.....k_v$  
\begin{equation}\label{sum_of_all}
\sum_{m=1}^{v}\frac{1}{k_m-\lambda}=\frac{(\rho +1)^2}{\rho}-\frac{1}{\lambda}
\end{equation}
\begin{align}
&e_1r_1(r_1 - 1) + e_2r_2(r_2 - 1) = \lambda v(v - 1) \\
&(\rho-1)e_1=\lambda(\rho + 1)-r_2 \label{e1form}\\
&(\rho-1)e_2=\rho r_1-\lambda(\rho + 1) \label{e2form}
\end{align}
\noindent
\noi
Let $A$ be a block of a Ryser design $\mathcal{D}$
 with $|A|=\tau_1(A)+\tau_2(A)$, a simple two way counting gives,
\begin{equation*}\label{tau1r1-1+tau2r2-1}
(r_1-1)\tau_1(A)+(r_2-1)\tau_2(A)=\lambda(v-1)
\end{equation*}
which gives
\begin{align*}
&\tau_1(A)=\lambda-td\\
&\tau_2(A)=\lambda+tc\\
&|A|=2\lambda+ta
\end{align*} for some integer $t$. These findings are summed in the following
lemma.
\begin{lemma}\label{lemma:blocksize}
	Let $A$ be a block of a Ryser design. Then the size of $A$ has the form  $|A|=2\lambda+ta$, where $t$ is an integer. The block $A$ is large, average or small depending on whether $t>0,  t=0$ or $t<0$ respectively. Hence $\tau_1(A)=\tau_2(A)=\lambda$ if $A$ is an average block,  $\tau_1(A)>\lambda>\tau_2(A)$ if $A$ is a small block and  $\tau_2(A)>\lambda>\tau_1(A)$ if $A$ is a large block.
\end{lemma}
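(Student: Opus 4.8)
The plan is to derive everything from the flag-counting identity displayed just before the statement, so that the proof is essentially bookkeeping once the divisibility constraints hidden in $c$, $d$ and $a$ are used. First I would establish $(r_1-1)\tau_1(A)+(r_2-1)\tau_2(A)=\lambda(v-1)$ by counting in two ways the pairs $(x,B)$ with $x\in A$, $B$ a block of $\mathcal{D}$ with $B\neq A$, and $x\in B$: summing over $x\in A$ gives the left-hand side, since a point of $A\cap E_i$ lies in exactly $r_i-1$ blocks besides $A$; summing over the $v-1$ blocks $B\neq A$ gives the right-hand side, since $|A\cap B|=\lambda$ for every such $B$.

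Next I would substitute $r_1-1=cg$, $r_2-1=dg$ and $v-1=(c+d)g$, cancel $g$, and obtain $c\,\tau_1(A)+d\,\tau_2(A)=\lambda(c+d)$, which rearranges to $c(\lambda-\tau_1(A))=d(\tau_2(A)-\lambda)$. Because $\gcd(c,d)=1$, the integer $c$ divides $\tau_2(A)-\lambda$; writing $\tau_2(A)-\lambda=tc$ for some integer $t$ forces $\lambda-\tau_1(A)=td$, so $\tau_1(A)=\lambda-td$ and $\tau_2(A)=\lambda+tc$. Adding these gives $|A|=\tau_1(A)+\tau_2(A)=2\lambda+t(c-d)=2\lambda+ta$.

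Finally I would read off the trichotomy from the sign of $t$. Since $\rho=c/d>1$ with $c,d$ positive integers, we have $a=c-d\geq 1>0$, so $|A|-2\lambda=ta$ has the same sign as $t$: the block $A$ is large, average or small according as $t>0$, $t=0$ or $t<0$. In the average case $t=0$ immediately yields $\tau_1(A)=\tau_2(A)=\lambda$; in the small case $t<0$ gives $\tau_1(A)=\lambda-td>\lambda>\lambda+tc=\tau_2(A)$; and in the large case $t>0$ gives $\tau_2(A)=\lambda+tc>\lambda>\lambda-td=\tau_1(A)$. I do not expect a real obstacle here; the only points requiring care are the clean justification of the two-way count and the observation that the hypothesis $\rho>1$ (equivalently $r_1>r_2$) is exactly what makes $a$ positive, so that the sign of $t$ genuinely controls the block type; one should also note in passing that $c$ and $d$ are positive because $\rho$ is a positive rational in lowest terms.
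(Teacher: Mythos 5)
Your proof is correct and follows the same route the paper takes: the two-way flag count giving $(r_1-1)\tau_1(A)+(r_2-1)\tau_2(A)=\lambda(v-1)$, followed by the substitution $r_1-1=cg$, $r_2-1=dg$, $v-1=(c+d)g$ and the coprimality of $c$ and $d$ to extract the integer parameter $t$. In fact you supply more detail than the paper does (it merely asserts "which gives" the formulas), and your explicit remarks that $\gcd(c,d)=1$ yields the divisibility and that $\rho>1$ forces $a=c-d>0$ are exactly the points that make the sign trichotomy legitimate.
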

\noindent
In a Ryser design $\mathcal{D}$ with blocks sizes $|A_i|=k_i \text{ for } i=1,2,\ldots,v $ the column sum of the incidence matrix is equal to the row sum of the incidence matrix which implies $\sum k_i= e_1r_1+e_2r_2$.
Hence from equation (\ref{e1form}) and (\ref{e2form}), we get 
\begin{equation}\label{e1r1e2r2form}
e_1r_1+e_2r_2=\lambda(v-1)+r_1r_2
\end{equation}

\noindent
In this article, a binary set operation $\triangle$ is defined that gives an equivalence relation on the set of Ryser designs of order $v$. Some
observations on the block complementation procedure of Ryser-Woodall
are made. It is shown that a Ryser design of order $v$ and index $\lambda$ with two block sizes and with one block size $2\lambda$ is of Type-1.
It is also shown that, under the assumption that large and small blocks do not coexist in any Ryser design equivalent to a given Ryser design, the given Ryser design must be of Type-1.

\section{\bf{Equivalence classes of Ryser designs}}
\noi The binary set operation $A\triangle B=(A\cap B^c)\cup(A^c\cap B)$ is well known and $A\triangle B$ is the set of all the elements that are in precisely one of the sets $A$ and $B$. The following (Boolean algebraic) lemma is also well known and hence the elementary proof is omitted. 
\begin{lemma}The binary set operation $A\triangle B$ has the following properties:
\be
\item[(a)] $\bigtriangleup$ is commutative. Further, $A \bigtriangleup B = A$ if and only if 
$B = \emptyset$. Also $A \bigtriangleup A = \emptyset$. 
\item [(b)] $\bigtriangleup$ is associative. In fact the set 
$A_1 \bigtriangleup A_2  \bigtriangleup \cdots  \bigtriangleup A_n$
precisely consists of those elements that belong to an odd number of $A_i$s. 
\ee
\end{lemma}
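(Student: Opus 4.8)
The plan is to route everything through indicator functions valued in the field $\mathbb{F}_2=\mathbb{Z}/2\mathbb{Z}$. For a subset $S$ of the ground set $X$, write $\chi_S\colon X\to\mathbb{F}_2$ for the function equal to $1$ on $S$ and $0$ elsewhere; a subset of $X$ is of course determined by its indicator function. The one computational fact driving the whole lemma is that, for every $x\in X$,
\[
x\in A\triangle B \iff \text{exactly one of } x\in A,\ x\in B \text{ holds} \iff \chi_A(x)+\chi_B(x)=1 ,
\]
so that $\chi_{A\triangle B}=\chi_A+\chi_B$ as elements of the abelian group $(\mathbb{F}_2^{X},+)$. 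I would verify the displayed equivalence by the routine four-case check on whether $x$ lies in $A$ and whether $x$ lies in $B$, reading off membership in $A\triangle B=(A\cap B^c)\cup(A^c\cap B)$ in each case.

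Granting this, part (a) is immediate. Commutativity of $\triangle$ follows from commutativity of $+$, since $\chi_{A\triangle B}=\chi_A+\chi_B=\chi_B+\chi_A=\chi_{B\triangle A}$. The identity $A\triangle A=\emptyset$ follows from $\chi_A+\chi_A=0=\chi_\emptyset$, as every element of $\mathbb{F}_2^{X}$ has order dividing $2$. For the clause ``$A\triangle B=A$ if and only if $B=\emptyset$'', the reverse implication is a direct substitution of $B=\emptyset$ into the defining formula, while the forward implication is cancellation of $\chi_A$ in the group $\mathbb{F}_2^{X}$: from $\chi_A+\chi_B=\chi_A$ one gets $\chi_B=0$, i.e. $B=\emptyset$.

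For part (b), associativity is likewise inherited from $\mathbb{F}_2^{X}$, since $\chi_{(A\triangle B)\triangle C}=(\chi_A+\chi_B)+\chi_C=\chi_A+(\chi_B+\chi_C)=\chi_{A\triangle(B\triangle C)}$. A short induction on $n$ (or simply iterating the pointwise identity) then gives $\chi_{A_1\triangle\cdots\triangle A_n}=\chi_{A_1}+\cdots+\chi_{A_n}$, so that for each $x\in X$ one has $\chi_{A_1\triangle\cdots\triangle A_n}(x)=\sum_{i=1}^{n}\chi_{A_i}(x)\pmod{2}$, which equals $1$ precisely when $x$ lies in an odd number of the $A_i$. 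That is exactly the asserted description of $A_1\triangle\cdots\triangle A_n$.

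There is no genuine obstacle here; the point worth flagging is that one should resist the temptation to prove associativity set-theoretically by partitioning $X$ into the eight cells cut out by membership in $A$, $B$ and $C$ and checking the outcome cell by cell — correct, but tedious. The indicator-function device trivialises the bookkeeping and, as a bonus, produces the $n$-fold ``odd membership'' statement in the same breath, which is the form that will be needed when block complementations are iterated later in the paper.
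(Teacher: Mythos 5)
Your proof is correct: the identification $\chi_{A\triangle B}=\chi_A+\chi_B$ in $\mathbb{F}_2^{X}$ cleanly yields commutativity, associativity, the cancellation statement, and the odd-membership description of the $n$-fold symmetric difference. The paper itself omits the proof entirely (declaring the lemma well known and elementary), and your indicator-function argument is the standard way to supply it, so there is nothing to reconcile.
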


\noi
Let $V$ be a $v$-set ({\it which is now fixed for the entire discussion to follow}). Let $\Omega$ be a family of $v$ distinct non-empty subsets of $V$. {\it Members of $\Omega$ are called its blocks}. Let $A \in \Omega$. Define 
a function (block complementation w.r.t. $A$) $f_A$ on $\Omega$ as follows: $f_A(A) = A$ and for all $B \neq A$ define $f_A(B) = A \bigtriangleup B$. Then ${\Omega}' = f_A({\Omega})$ is also a family of $v$ subsets of $V$. We also emphasize that $f_A$ is not defined on $\Omega$ if $A \notin \Omega$. The following lemma is then obvious. 
%
%
\begin{lemma}
With everything as above, let ${\Omega}' = f_A({\Omega})$. Then:
\be
\item[(a)] $A \in {\Omega}'$. 
\item[(b)] $ {\Omega}'$ is also a collection of $v$ distinct non-empty subsets of  $\Omega$. 
\item[(c)] $f_A({\Omega}') = \Omega$. 
\ee
\end{lemma}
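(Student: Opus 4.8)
The plan is to read off all three parts directly from the definition of $f_A$ and the Boolean-algebra lemma quoted just above, the single point requiring any care being that $f_A$ never maps a block $B \neq A$ back onto $A$, which is where the hypothesis that all blocks are non-empty gets used. Part (a) is immediate: by definition $f_A(A) = A$, so $A \in \Omega'$.

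For part (b) I would argue in two steps. First, every member of $\Omega'$ is non-empty: $A$ is non-empty by hypothesis, and for $B \in \Omega$ with $B \neq A$ we have $f_A(B) = A \triangle B$, which by part (a) of the preceding lemma equals $\emptyset$ only when $A = B$; hence $A \triangle B \neq \emptyset$. Second, $f_A$ is injective on $\Omega$, so $|\Omega'| = |\Omega| = v$ and the members of $\Omega'$ are distinct. Indeed, if $f_A(B_1) = f_A(B_2)$ with $B_1, B_2 \neq A$, then $A \triangle B_1 = A \triangle B_2$, and applying $\triangle A$ to both sides and using associativity together with $A \triangle A = \emptyset$ yields $B_1 = B_2$; and the mixed case $f_A(A) = f_A(B_2)$ with $B_2 \neq A$ would give $A = A \triangle B_2$, forcing $B_2 = \emptyset$ by the Boolean lemma, contrary to blocks being non-empty. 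Thus $f_A$ is a bijection from $\Omega$ onto $\Omega'$, and $\Omega'$ consists of $v$ distinct non-empty subsets of $V$.

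For part (c) I would show that $f_A \circ f_A$ is the identity on $\Omega$; since $A \in \Omega'$ by part (a), $f_A(\Omega')$ is defined, and then $f_A(\Omega') = f_A(f_A(\Omega)) = \Omega$. On the block $A$ this is clear. For $B \neq A$ we noted above that $A \triangle B \neq A$, so $f_A(A \triangle B) = A \triangle (A \triangle B) = (A \triangle A) \triangle B = \emptyset \triangle B = B$, again using associativity and $A \triangle A = \emptyset$. I do not expect any real obstacle here; the only subtlety, as flagged, is the case distinction ruling out that $f_A$ could collapse some $B \neq A$ onto $A$, and this is precisely the role of the non-emptiness of the blocks.
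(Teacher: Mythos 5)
Your proof is correct and is exactly the routine verification intended here: the paper declares this lemma ``obvious'' and supplies no proof at all. You rightly isolate the one point needing any care --- that non-emptiness of the blocks prevents $f_A$ from collapsing some $B \neq A$ onto $A$ (and guarantees $A \triangle B \neq \emptyset$) --- and the rest follows from $A \triangle A = \emptyset$ and associativity, just as the authors presumably had in mind.
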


\begin{lemma}\label{lemma:map}
Let $f_A({\Omega}) = {\Omega}'$ and let $f_{A \bigtriangleup B}({\Omega}') = {\Omega}''$ where $A$ and $B$ are distinct subsets in 
$\Omega$.  Then ${\Omega}'' = f_B(\Omega)$. 
\end{lemma}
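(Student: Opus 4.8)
The plan is to compute the collection $\Omega''$ block-by-block and check that it coincides, as a family of $v$ subsets of $V$, with $f_B(\Omega)$. It is worth stressing at the outset that the assertion is an equality of the two collections, \emph{not} a pointwise identity of the maps $f_{A\bigtriangleup B}\circ f_A$ and $f_B$ (indeed these two maps already disagree at $B$); so the argument is a matching of unordered lists, not of functions.

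First I would record the explicit description $\Omega' = \{A\}\cup\{A\bigtriangleup C : C\in\Omega,\ C\neq A\}$ coming straight from the definition of $f_A$. Since $B\in\Omega$ and $B\neq A$, the set $A\bigtriangleup B = f_A(B)$ lies in $\Omega'$, so $f_{A\bigtriangleup B}$ is indeed defined on $\Omega'$; moreover $A\bigtriangleup B$ is nonempty (as $A\neq B$) and, since $B\neq\emptyset$, we have $A\bigtriangleup B\neq A$, so among the listed members of $\Omega'$ the only one equal to $A\bigtriangleup B$ is $f_A(B)$ itself.

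Next I would push each member of $\Omega'$ through $f_{A\bigtriangleup B}$. By definition it fixes $A\bigtriangleup B$ and sends every other $D\in\Omega'$ to $(A\bigtriangleup B)\bigtriangleup D$. Using commutativity and associativity of $\bigtriangleup$ from the earlier Boolean lemma, this yields $A\mapsto (A\bigtriangleup B)\bigtriangleup A = B$; $A\bigtriangleup C\mapsto (A\bigtriangleup B)\bigtriangleup(A\bigtriangleup C) = B\bigtriangleup C$ for each $C\in\Omega\setminus\{A,B\}$; and $A\bigtriangleup B\mapsto A\bigtriangleup B$. Hence $\Omega'' = \{B\}\cup\{A\bigtriangleup B\}\cup\{B\bigtriangleup C : C\in\Omega\setminus\{A,B\}\}$.

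Finally I would compare this with $f_B(\Omega) = \{B\}\cup\{B\bigtriangleup C: C\in\Omega,\ C\neq B\}$; peeling off the term $C=A$ gives $f_B(\Omega) = \{B\}\cup\{B\bigtriangleup A\}\cup\{B\bigtriangleup C: C\in\Omega\setminus\{A,B\}\}$, and since $B\bigtriangleup A = A\bigtriangleup B$ this is exactly the description of $\Omega''$ just obtained. The only point requiring care is the bookkeeping of which listed blocks coincide — in particular that $f_A$ hits $A\bigtriangleup B$ precisely at $B$, and that $A\neq A\bigtriangleup B$ forces $A$ to be mapped by the generic rule rather than fixed — but because $f_A$ and $f_{A\bigtriangleup B}$ are bijections between families of $v$ distinct subsets (Lemma preceding), there is no collision or double counting and the computation goes through directly.
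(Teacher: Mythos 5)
Your proof is correct and follows essentially the same route as the paper's: compute $f_{A\bigtriangleup B}f_A$ on each member of $\Omega$ (getting $A\mapsto B$, $B\mapsto A\bigtriangleup B$, and $C\mapsto B\bigtriangleup C$ otherwise) and match the resulting family against $f_B(\Omega)$. Your explicit remark that the statement is an equality of families rather than of maps — since the two compositions genuinely disagree at $B$ — is a point the paper leaves implicit, but the substance of the argument is identical.
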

\begin{proof}
 Let ${\Omega}^* = f_B(\Omega)$. Then besides $B$, ${\Omega}^*$ contains all the sets of the form $B \bigtriangleup C$ where $C \in \Omega$ and $C \neq B$. Let $C \in \Omega$ and $C \neq A, B$. Then 
$$f_{A \bigtriangleup B}f_A(C) = f_{A \bigtriangleup B}(A \bigtriangleup C) = (A \bigtriangleup B) \bigtriangleup (A \bigtriangleup C) = (B \bigtriangleup C)$$
Further, $f_{A \bigtriangleup B}f_A(A) = f_{A \bigtriangleup B}(A) = (A \bigtriangleup B) \bigtriangleup A = B \text{ and } f_{A \bigtriangleup B}f_A(B) = f_{A \bigtriangleup B}(A \bigtriangleup B) = A \bigtriangleup B$. This shows that ${\Omega}^* = {\Omega}''$. 
\end{proof}

\noi {\it We also define a generic universal function on a family $\Omega$ of subsets of $V$:} $g(A) = A$ for every $A \in \Omega$. Evidently, $g(\Omega) = \Omega$ and $g$ is valid (properly defined) on any $\Omega$. 
%
%
%
\begin{thm}
Let $\Omega$ be a family of $v$ distinct non-empty subsets of $V$.
\be
\item[(a)]  Consider the following diagram:
$$ \Omega = {\Omega}_0 \xrightarrow{h_1} {\Omega}_1 \xrightarrow{h_2} {\Omega}_2 \xrightarrow{h_3} \cdots\cdots  \xrightarrow{h_n} {\Omega}_n = {\Omega}'$$ 
where each $h_i$ equals $g$ or $f_{A_i}$ and the functions are valid on the families they are defined. Then ${\Omega}' = g({\Omega}) = \Omega$ or ${\Omega}' = f_A(\Omega)$ for some $A \in \Omega$. 
\item[(b)] Let $\bf X$ be the set of all ${\Omega}'$ that can be obtained from $\Omega$ by a sequence of functions as in (a) and let $\bf Y$ be the set of all ${\Omega}''$ that can be obtained by a single function $h(\Omega) = {\Omega}''$ where $h = g$ or $h = f_A$ for some $A \in \Omega$. Then  $\bf X =  \bf Y$. 
\item[(c)] For two families $\Omega$ and $\Sigma$ of $v$ distinct nonempty subsets of $V$, write $\Sigma \sim \Omega$ if $\Sigma = h(\Omega)$ where $h = g$ or $h = f_A$ for some $A \in \Omega$. Then 
$\sim$ is an equivalent relation. 
\ee
\end{thm}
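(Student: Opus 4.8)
The plan is to establish part (a) by induction on the chain length $n$, then read off (b) as an immediate reformulation of (a), and finally derive (c) from (b).

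\emph{Part (a).} For $n=0$ there is nothing to prove: $\Omega' = \Omega = g(\Omega)$. Assume the claim for chains of length $n-1$, so that $\Omega_{n-1}$ equals $\Omega$ or equals $f_A(\Omega)$ for some $A \in \Omega$; I then analyse $\Omega_n = h_n(\Omega_{n-1})$. If $\Omega_{n-1} = \Omega$, then $h_n$ is either $g$, giving $\Omega_n = \Omega$, or $f_{A_n}$ with $A_n \in \Omega$, giving $\Omega_n = f_{A_n}(\Omega)$; in both cases we are done. Now suppose $\Omega_{n-1} = f_A(\Omega)$ with $A \in \Omega$. If $h_n = g$ then $\Omega_n = f_A(\Omega)$ and we are done. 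Otherwise $h_n = f_{A_n}$ with $A_n \in \Omega_{n-1}$; here the key point is that the blocks of $f_A(\Omega)$ are exactly $A$ together with the sets $A \bigtriangleup B$ for $B \in \Omega$, $B \neq A$, so $A_n$ is one of these. If $A_n = A$, then $\Omega_n = f_A(f_A(\Omega)) = \Omega$, since $f_A$ is an involution with $A$ lying in its image. If $A_n = A \bigtriangleup B$ for some $B \in \Omega$ with $B \neq A$, then Lemma \ref{lemma:map} applies verbatim and gives $\Omega_n = f_{A \bigtriangleup B}(f_A(\Omega)) = f_B(\Omega)$ with $B \in \Omega$. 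This exhausts the cases and completes the induction.

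\emph{Part (b).} The inclusion $\mathbf{Y} \subseteq \mathbf{X}$ is immediate, a single function being a chain of length one. Conversely, part (a) says that every member of $\mathbf{X}$ has the form $g(\Omega)$ or $f_A(\Omega)$ with $A \in \Omega$, hence belongs to $\mathbf{Y}$; thus $\mathbf{X} = \mathbf{Y}$.

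\emph{Part (c).} Reflexivity holds because $\Omega = g(\Omega)$. For symmetry, suppose $\Sigma \sim \Omega$; if $\Sigma = g(\Omega) = \Omega$ then trivially $\Omega = g(\Sigma)$, and if $\Sigma = f_A(\Omega)$ with $A \in \Omega$ then $A \in \Sigma$ and $f_A(\Sigma) = \Omega$ by the properties of $f_A$ recalled above, so $\Omega \sim \Sigma$. For transitivity, if $\Gamma \sim \Sigma$ and $\Sigma \sim \Omega$ then $\Gamma$ is obtained from $\Omega$ by a chain of two functions, so by (a) (equivalently (b)) we get $\Gamma = g(\Omega)$ or $\Gamma = f_A(\Omega)$ for some $A \in \Omega$, i.e. $\Gamma \sim \Omega$. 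Hence $\sim$ is an equivalence relation.

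The substantive ingredient is Lemma \ref{lemma:map}, which is already proved; the rest is bookkeeping. The one spot requiring care is the inductive step of (a): one must remember that the complementing block $A_n$ used at stage $n$ is required to lie in the current family $\Omega_{n-1}$, so that when $\Omega_{n-1} = f_A(\Omega)$ it is forced to equal $A$ or to have the shape $A \bigtriangleup B$ with $B \in \Omega$ — precisely the two cases handled by the involution property and by Lemma \ref{lemma:map}.
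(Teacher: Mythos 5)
Your proof is correct and follows essentially the same route as the paper: induction on the chain length for (a) with Lemma \ref{lemma:map} doing the real work, (b) as an immediate consequence, and (c) from $g$, the involution property of $f_A$, and part (a). The only (welcome) difference is that you explicitly separate out the case $A_n = A$, handled by $f_A(f_A(\Omega)) = \Omega$, which the paper's phrasing leaves implicit.
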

\begin{proof}
If some $h_i = g$ then we can effectively reduce the length of the sequence. Also, if ${\Omega}' = {\Omega}_n = \Omega$, then ${\Omega}' = g(\Omega)$ and we are done. The proof of (a) is by induction on $n$. The statement clearly holds for $n = 1$. Let 
$$ \Omega = {\Omega}_0 \xrightarrow{h_1} {\Omega}_1 \xrightarrow{h_2} {\Omega}_2 \xrightarrow{h_3} \cdots\cdots  \xrightarrow{h_n} {\Omega}_n = {\Omega}' \xrightarrow{h_{n + 1}} {\Omega}_{n + 1} = {\Omega}''$$
If ${\Omega}' = \Omega$, then ${\Omega}'' = g(\Omega)$ or ${\Omega}'' = f_A(\Omega)$ for some $A \in \Omega$ and we are done. Let ${\Omega}' \neq \Omega$. Then by the induction hypothesis, ${\Omega}' =  f_A(\Omega)$ for some $A \in \Omega$. If $h_{n + 1} = g$, then ${\Omega}'' = {\Omega}' = f_A(\Omega)$ and we are done. Otherwise, 
$h_{n + 1} = f_{A \bigtriangleup B}({\Omega}')$ for some $A \bigtriangleup B \in {\Omega}'$, that is, ${\Omega}'' = f_{A \bigtriangleup B}f_A(\Omega) = f_B(\Omega)$ for some $B \in \Omega$ (by Lemma \ref{lemma:map}) proving (a). Consider (b). Clearly ${\bf Y} \subset {\bf X}$. Using (a), if ${\Omega}' \in {\bf X}$, then either ${\Omega}' = \Omega$ or 
${\Omega}' = f_A(\Omega)$ for some $A \in \Omega$ showing that 
${\Omega}' \in {\bf Y}$. Hence, $\bf X =  \bf Y$. Note that reflexivity and symmetry of $\sim$ are taken care of by the function
$g$ and the fact that $f_A(f_A(\Omega)) = \Omega$ and (a) clearly proves transitivity. Thus $\sim$ is indeed an equivalence relation proving (c). 
\end{proof}

\begin{corollary}
Let $\Omega = \{A_1, A_2, \cdots, A_v\}$ be a family of $v$ distinct nonempty subsets of $V$. Let ${\Omega}_i = 
f_{A_i}(\Omega)$ and let $(\Omega)$ denote the equivalence class of $\Omega$. Then:
$$ (\Omega) = \{\Omega\} \cup \{{\Omega}_i: i = 1, 2, \cdots, v\}$$
\end{corollary}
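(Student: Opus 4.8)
The plan is to deduce this directly from parts (b) and (c) of the preceding Theorem, which already describe the equivalence class of $\Omega$ as a one-step orbit under the admissible functions. So I would not re-run any induction; I would simply unwind the definition of $\sim$ and enumerate the functions that are valid on $\Omega$.

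First I would recall that, by the definition of $\sim$ given in part (c), a family $\Sigma$ of $v$ distinct nonempty subsets of $V$ satisfies $\Sigma \sim \Omega$ exactly when $\Sigma = h(\Omega)$ for some function $h$ that is either the generic function $g$ or a complementation $f_A$ with $A \in \Omega$. Since $\sim$ is an equivalence relation (part (c)), the class $(\Omega)$ is $\{\Sigma : \Sigma \sim \Omega\}$, which is precisely the set ${\bf Y}$ introduced in part (b); one may alternatively invoke ${\bf X} = {\bf Y}$ to note that allowing arbitrary finite sequences of admissible functions, as in part (a), produces nothing beyond ${\bf Y}$. Thus the corollary reduces to identifying the elements of ${\bf Y}$.

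Next I would enumerate ${\bf Y}$ explicitly. The function $g$ fixes $\Omega$, so $\Omega \in {\bf Y}$. For a complementation $f_A$, recall that $f_A$ is defined on $\Omega$ only when $A \in \Omega$, and by hypothesis the blocks of $\Omega$ are exactly $A_1, A_2, \ldots, A_v$; hence the only admissible complementations are $f_{A_1}, \ldots, f_{A_v}$, with images $f_{A_i}(\Omega) = \Omega_i$. Therefore ${\bf Y} = \{\Omega\} \cup \{\Omega_i : i = 1, \ldots, v\}$, which is exactly the claimed description of $(\Omega)$. There is no real obstacle here — all the substance lives in the Theorem — and the only point worth stating is that this is an equality of sets, with no assertion that the $v+1$ listed families are pairwise distinct (for instance $\Omega_i = \Omega$ when $f_{A_i}$ happens to fix $\Omega$, such as in the symmetric-design case $A_i = E_1$ or $A_i = E_2$), so nothing is being claimed about $|(\Omega)|$.
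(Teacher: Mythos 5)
Your argument is correct and is exactly the intended derivation: the paper states this corollary without proof as an immediate consequence of the preceding theorem, and your unwinding of the definition of $\sim$ via ${\bf X}={\bf Y}$, together with the observation that the only admissible complementations on $\Omega$ are $f_{A_1},\ldots,f_{A_v}$, is precisely that. Your closing remark that the $v+1$ listed families need not be pairwise distinct is a sensible clarification and does not conflict with anything in the paper.
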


\begin{defn}
Let $\Omega = \{A_1, A_2, \cdots, A_v\}$ be a family of $v$ distinct nonempty subsets of $V$. For a point $x \in V$, let $r(x)$ denote the replication number (the number of blocks containing $x$) of $x$. Suppose we have constants $r_1 >r_2$ such that 
\be
\item[(i)] $r_1 + r_2 = v + 1$. 
\item[(ii)] $r(x) = r_1$ or $r(x) = r_2$ for every $x \in V$ (the possibility of all replication numbers being equal is also admissible).  
\ee
Then $\Omega$ is called a {\bf Ryser system with parameter triple $(v, r_1, r_2)$}. 
\end{defn}

\begin{thm}
Let $\Omega$ be a Ryser system with parameter triple $(v, r_1, r_2)$. Then for every ${\Omega}' \in (\Omega)$ the Ryser system ${\Omega}'$ also has the same parameter triple. 
\end{thm}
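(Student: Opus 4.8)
The plan is to reduce the statement to a single block complementation and then carry out a point-by-point replication count. By the Corollary immediately preceding the statement, the equivalence class $(\Omega)$ is $\{\Omega\} \cup \{\Omega_i : i = 1,\dots,v\}$ with $\Omega_i = f_{A_i}(\Omega)$; since $\Omega$ itself obviously has the right parameter triple, it suffices to prove the following: if $\Omega$ is a Ryser system with parameter triple $(v, r_1, r_2)$ and $A \in \Omega$, then $\Omega' = f_A(\Omega)$ is again a Ryser system with the same triple. (Alternatively, one can run this along the sequence of functions $h_i$ and invoke transitivity; the single-step version is the content.) By the preceding elementary lemma on $f_A$, $\Omega'$ is again a family of $v$ distinct nonempty subsets of $V$, so there is nothing to check about cardinalities, and $r_1 + r_2 = v+1$, $r_1 > r_2$ are constants that do not change; the only thing to verify is condition (ii), that every point has replication number $r_1$ or $r_2$ in $\Omega'$.

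Fix $x \in V$ and write $r(x)$, $r'(x)$ for its replication numbers in $\Omega$ and $\Omega'$. I would split on whether $x \in A$. If $x \notin A$, then for every $B \in \Omega$ with $B \neq A$ we have $x \in A \triangle B \iff x \in B$, and the distinguished block $A$ of $\Omega'$ does not contain $x$; hence the blocks of $\Omega'$ through $x$ are exactly the sets $A \triangle B$ with $x \in B$ (all of which automatically satisfy $B \neq A$), so $r'(x) = r(x)$. If $x \in A$, then $A \in \Omega'$ contains $x$, and for $B \neq A$ we have $x \in A \triangle B \iff x \notin B$; among the $v-1$ blocks of $\Omega$ other than $A$, exactly $r(x)-1$ contain $x$, so $v - r(x)$ do not, and therefore $r'(x) = 1 + (v - r(x)) = v + 1 - r(x)$. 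Combining the two cases, $r'(x) \in \{r(x),\, r_1 + r_2 - r(x)\}$, and since $t \mapsto r_1 + r_2 - t$ merely interchanges $r_1$ and $r_2$, in every case $r'(x) \in \{r_1, r_2\}$. Thus $\Omega'$ satisfies (i) and (ii) with the same $r_1 > r_2$, i.e. $\Omega'$ is a Ryser system with parameter triple $(v, r_1, r_2)$, and the theorem follows.

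I do not expect a real obstacle here; the proof is essentially a bookkeeping argument. The one place deserving care is the case $x \in A$: one must remember that $A$ is itself a block of $\Omega'$ (and so is counted once), and that passing to $A \triangle B$ flips membership of $x$ precisely because $x$ lies in $A$. It is also worth noting explicitly that the degenerate "all replication numbers equal" possibility allowed in the definition need not be preserved — complementation with respect to a proper nonempty $A$ can produce points of both replication numbers — which is exactly why condition (ii) is phrased as the inclusion $r(x) \in \{r_1, r_2\}$ rather than as the requirement that both values be attained.
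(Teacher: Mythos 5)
Your proof is correct and follows essentially the same route as the paper: reduce to a single block complementation, then split on whether $x\in A$, obtaining $r'(x)=r(x)$ in one case and $r'(x)=v+1-r(x)=r_1+r_2-r(x)$ in the other. The paper's proof is the same counting argument, stated slightly more tersely.
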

\begin{proof}
Let $f_A({\Omega}) = {\Omega}'$. If $x \notin A$, then $r_{{\Omega}'}(x) = r_{\Omega}(x)$ which equals $r_1$ or $r_2$ since $\Omega$ is a Ryser system. Let $x \in A$ and suppose w.l.o.g. that $r_{\Omega}(x) = r_1$. Then $f_A(\Omega)$ has exactly $(v - 1) - (r_1 - 1) = r_2 - 1$ (other than $A$ itself) that contain $x$. Hence 
$r_{{\Omega}'}(x) = (r_2 - 1) + 1 = r_2$. 
\end{proof}

\begin{thm}[{\cite[Proposition 14.1.7]{Ion2}}]\label{thm:equivalence-1}
Let $\mathcal{D}$ be a Ryser design on $v$ points. Then it is also a Ryser system with parameter triple 
$(v, r_1, r_2)$ and all Ryser designs in the equivalence class of 
$\mathcal{D}$ have the same parameter triple $(v, r_1, r_2)$. 
\end{thm}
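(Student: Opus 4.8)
\noindent The plan is to synthesise the Ryser--Woodall Theorem with the theorem on Ryser systems established just above. First I would apply Theorem~\ref{thm:RyserWoodall}: a Ryser design $\mathcal{D}$ on $v$ points carries integers $r_1 \neq r_2$ with $r_1 + r_2 = v + 1$ such that every point of $\mathcal{D}$ lies in exactly $r_1$ blocks or in exactly $r_2$ blocks. Since the definition of a Ryser design forces its $v$ blocks to be pairwise distinct (no repeated blocks are allowed) and nonempty (each block has size exceeding $\lambda$), relabelling so that $r_1 > r_2$ shows at once that $\mathcal{D}$ satisfies every requirement in the definition of a Ryser system with parameter triple $(v, r_1, r_2)$.

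Next I would transport this to the whole equivalence class. By the theorem on Ryser systems proved above, every $\Omega' \in (\mathcal{D})$ is again a Ryser system with the same triple $(v, r_1, r_2)$: block complementation $f_A$ fixes the replication number of every point outside $A$ and turns a point $x \in A$ of replication number $r_1$ into one of replication number $(v - 1) - (r_1 - 1) + 1 = r_2$, and symmetrically. In particular, every $\Omega'$ in the class that happens to be a Ryser design has parameter triple $(v, r_1, r_2)$. If one prefers a more computational route, the preceding corollary expresses $(\mathcal{D})$ explicitly as $\{\mathcal{D}\} \cup \{f_{A_i}(\mathcal{D}) : i = 1, \dots, v\}$, so it suffices to inspect single complementations, and Proposition~\ref{prop:complement-properties}(iii) then gives $r_1(\mathcal{D} * A) = r_1(\mathcal{D})$ directly, after which $r_1 + r_2 = v + 1$ pins down $r_2$.

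The only point needing a word of care — and the only mild obstacle — is that a member of the equivalence class of a Ryser design need not itself be a Ryser design. By Proposition~\ref{prop:complement-properties} the complement $\mathcal{D} * A$ always lies in $\mathcal{D}_r(X)$, and by part~(vii) it is a symmetric design precisely when $A = E_1(\mathcal{D})$ or $A = E_2(\mathcal{D})$. Such a symmetric design still has $v$ points, $v$ blocks, and common block size $r_1$ or $r_2$, so its parameter triple is still $(v, r_1, r_2)$ in the sense of the Ryser-system definition; hence the assertion concerning ``all Ryser designs in the equivalence class'' is unaffected, and in fact the stronger statement that \emph{every} element of $\mathcal{D}_r(X)$ reachable from $\mathcal{D}$ shares the triple holds. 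Distinguishing these symmetric-design members from the genuine Ryser-design members is all that the argument really asks of us; everything else is a direct appeal to the two cited results.
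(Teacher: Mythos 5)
Your proposal is correct and follows exactly the route the paper intends: the paper gives no written proof of this theorem, treating it as an immediate consequence of the Ryser--Woodall Theorem (which makes $\mathcal{D}$ a Ryser system) together with the preceding theorem that every member of the equivalence class of a Ryser system shares its parameter triple. Your additional care about symmetric designs appearing in the equivalence class is a sensible refinement but does not change the argument.
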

\begin{thm}\label{thm:equivalence-2}
Let $\mathcal{D}$ be a Ryser design on $v$ points. Then the following conditions are equivalent: 
\be
\item[(a)] The equivalence class of $\mathcal{D}$ (under $\sim$) contains a 
symmetric design $\mathcal{E}$. 
\item[(b)] $\mathcal{D}$ is of Type-1.  
\item[(c)] Every Ryser design in the equivalence class of $\mathcal{D}$ is 
of Type-1. 
\ee
\end{thm}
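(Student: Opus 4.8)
The plan is to prove the cycle of implications $(b)\Rightarrow(a)\Rightarrow(c)\Rightarrow(b)$. The key enabling fact is the explicit description of an equivalence class obtained above: for a Ryser system $\Omega$ one has $(\Omega)=\{\Omega\}\cup\{f_{A_i}(\Omega):i=1,\dots,v\}$, so any two members of a single class are related by one application of some $f_A$ (or by $g$). I would also first record the elementary but essential observation that the Ryser--Woodall complementation $\mathcal{D}*A$ is nothing but the map $f_A$ applied to the block family of $\mathcal{D}$ --- this is visible on comparing the two definitions --- so that ``$\mathcal{D}$ is of Type-1'' is synonymous with ``$\mathcal{D}=f_A(\mathcal{E})$ for some symmetric design $\mathcal{E}$ and some block $A$ of $\mathcal{E}$''.

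For $(b)\Rightarrow(a)$ I would argue as follows. If $\mathcal{D}$ is of Type-1, write $\mathcal{D}=f_A(\mathcal{E})$ with $\mathcal{E}$ a symmetric design; then $A$ is a block of $\mathcal{D}$, and applying $f_A$ again and using $f_A\circ f_A=\mathrm{id}$ (Proposition \ref{prop:complement-properties}(i)) yields $f_A(\mathcal{D})=\mathcal{E}$. Hence $\mathcal{E}\in(\mathcal{D})$, so the equivalence class of $\mathcal{D}$ contains a symmetric design. The implication $(c)\Rightarrow(b)$ needs no work: $\mathcal{D}$ is a Ryser design lying in its own equivalence class, so $(c)$ applied to $\mathcal{D}$ itself gives $(b)$.

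The remaining implication $(a)\Rightarrow(c)$ is the heart of the statement. Suppose $\mathcal{E}\in(\mathcal{D})$ is symmetric, and let $\mathcal{D}'$ be an arbitrary Ryser design in the class $(\mathcal{D})$. Since $\mathcal{D}'$ and $\mathcal{E}$ lie in the same class, the explicit description of $(\mathcal{E})$ forces $\mathcal{D}'=\mathcal{E}$ or $\mathcal{D}'=f_A(\mathcal{E})$ for some block $A$ of $\mathcal{E}$; the first case cannot occur because a Ryser design has two block sizes whereas $\mathcal{E}$ does not. Thus $\mathcal{D}'=f_A(\mathcal{E})=\mathcal{E}*A$, which is by definition a Type-1 Ryser design, completing the cycle. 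I expect no genuine obstacle here; the only points demanding a line of care are the identification of $f_A$ with block complementation and the fact that an equivalence class may contain symmetric designs as well as Ryser designs (which is exactly why $(c)$ is phrased only about the Ryser designs of the class).
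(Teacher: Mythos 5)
Your proposal is correct and follows essentially the same route as the paper: the key implication (a)$\Rightarrow$(c) is obtained in both cases from the explicit description of the equivalence class (so that any Ryser design in the class of a symmetric design $\mathcal{E}$ must equal $\mathcal{E}*A$ for some block $A$, hence is Type-1), while (b)$\Rightarrow$(a) uses $f_A\circ f_A=\mathrm{id}$ (i.e.\ symmetry of $\sim$) and (c)$\Rightarrow$(b) is immediate. Your write-up is in fact slightly more explicit than the paper's, in identifying $f_A$ with Ryser--Woodall complementation and in ruling out the case $\mathcal{D}'=\mathcal{E}$ by the two-block-sizes condition.
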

\begin{proof} Let ${\mathcal{D}}^*$ be a symmetric design in the equivalence class of $\mathcal{D}$ and let ${\mathcal{D}}'$ be some Ryser design in the equivalence class of $\mathcal{D}$. Then there is some block $A$ in 
${\mathcal{D}}^*$ such that block complementation with respect to that block produces ${\mathcal{D}}'$. This proves that (a) implies both (b) and (c). Clearly (c) is stronger than (b). Finally equivalence of the relation $\sim$ shows that (b) implies (a). 
\end{proof}

\begin{thm}
{\it Assume the following hypothesis: Every Ryser design ${\mathcal{D}}$ that has a block $C$ of even size is of Type-1.} Then the Ryser design conjecture is true. 
\end{thm}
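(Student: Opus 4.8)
The plan is to use the equivalence-class machinery already set up: by Theorem~\ref{thm:equivalence-2} it suffices to show that the equivalence class of an arbitrary Ryser design $\mathcal{D}$ either contains a symmetric design or contains some Ryser design that falls under the stated hypothesis. The reduction is then carried out by a single block complementation. First I would dispose of the trivial case: if $\mathcal{D}$ itself has a block of even size, the hypothesis directly yields that $\mathcal{D}$ is of Type-1, and there is nothing more to do.

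So assume instead that every block of $\mathcal{D}$ has odd size. Since a Ryser design has at least two distinct block sizes we have $v\ge 2$, so we may fix two distinct blocks $A$ and $B$ of $\mathcal{D}$. By Proposition~\ref{prop:complement-properties} we have $\mathcal{D}*A\in\mathcal{D}_r(X)$, and $A\triangle B$ is one of its blocks, with
\[
|A\triangle B| = |A| + |B| - 2|A\cap B| = |A| + |B| - 2\lambda .
\]
As $|A|$ and $|B|$ are both odd, $|A\triangle B|$ is even. Now split into two sub-cases. If $\mathcal{D}*A$ is a Ryser design, then it is a Ryser design possessing a block of even size, so the hypothesis gives that $\mathcal{D}*A$ is of Type-1; since $\mathcal{D}*A$ lies in the equivalence class of $\mathcal{D}$, Theorem~\ref{thm:equivalence-2} forces $\mathcal{D}$ to be of Type-1 as well. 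If instead $\mathcal{D}*A$ is a symmetric design --- by Proposition~\ref{prop:complement-properties}(vii) this occurs exactly when $A$ equals $E_1(\mathcal{D})$ or $E_2(\mathcal{D})$ --- then the equivalence class of $\mathcal{D}$ contains a symmetric design, and again Theorem~\ref{thm:equivalence-2} gives that $\mathcal{D}$ is of Type-1.

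I expect essentially no serious obstacle: all the real work is contained in the earlier equivalence-class results (Theorem~\ref{thm:equivalence-2} and Proposition~\ref{prop:complement-properties}). The only points requiring a little care are the parity bookkeeping --- that odd plus odd minus $2\lambda$ is even, and that the resulting block is genuinely a block of the legitimate structure $\mathcal{D}*A$ --- and the bookkeeping needed to separate off the possibility that the chosen complementation produces a symmetric design rather than a Ryser design, which is handled above via part~(vii) of the Proposition. It may also be worth remarking explicitly that this theorem shows the full Ryser design conjecture is equivalent to its apparent special case for designs admitting a block of even size.
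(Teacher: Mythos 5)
Your proof is correct, but it reaches the even-sized block by a different observation than the paper does. The paper's proof does not split on parity at all: it notes that since every block is a proper subset of size at least $\lambda+1$, the pigeonhole principle forces two blocks $A$ and $B$ of the \emph{same} size $k$, and then $A\bigtriangleup B$ has size $2(k-\lambda)$, which is automatically even; it also prefaces this with an (actually unnecessary) appeal to the known validity of the conjecture for small $\lambda$. Your route instead splits into the case where $\mathcal{D}$ already has an even block and the case where all blocks are odd, in which case \emph{any} symmetric difference $|A|+|B|-2\lambda$ is even. Both arguments produce, after a single complementation, an equivalent design containing an even block and then invoke Theorem \ref{thm:equivalence-2}. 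What your version buys is self-containedness (no pigeonhole count, no citation of small-$\lambda$ results) and, more importantly, an explicit treatment of the possibility that $\mathcal{D}*A$ is a symmetric design rather than a Ryser design --- a case the paper's proof silently skips over, since the hypothesis as stated applies only to Ryser designs; you correctly dispose of it via Proposition \ref{prop:complement-properties}(vii) and the implication (a) $\Rightarrow$ (b) of Theorem \ref{thm:equivalence-2}. What the paper's version buys is slightly more: it exhibits the even block in the concrete form $2(k-\lambda)$ coming from two equal-sized blocks, which exist in every Ryser design without any parity hypothesis.
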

\begin{proof}
Since the Ryser design conjecture holds for small 
values of $\lambda$, we may assume that the given Ryser design $\mathcal{D}$ is one with $\lambda \geq {\lambda}_0 \geq 2$. Since every block
must have size $\geq \lambda$ it follows (pigeonhole principle) that we have two blocks $A$ and $B$ of the same size $k$. Complementing $\mathcal{D}$ w.r.t. $A$ then renders $A \bigtriangleup B$ to have size $2(k - \lambda)$ which is even. Since this is a block in an equivalent design $\mathcal{E}$, the hypothesis implies $\mathcal{E}$ is of Type-1 and therefore by Theorem \ref{thm:equivalence-2} $\mathcal{D}$ is also of Type-1. 
\end{proof}

\section{\bf{The main results}}

\begin{thm}\label{thm:blocks_in_new_design}
	Let $\mathcal{D}$ be a Ryser design of Type-2 of order $v$ and index $\lambda$. Let $A$ be a non average block. Let $\overline{\mathcal{D}}={\mathcal{D}}*A$ be the new Ryser design of order $v$ and index $\overline{\lambda}=(|A|-\lambda)$ obtained from $\mathcal{D}$ by block complementation with respect to the block $A$. If $A$ is a large (respectively small) block in $\mathcal{D}$ then $A$ is a small (respectively large) block in $\overline{\mathcal{D}}$. Let $B$ be any other block of $\mathcal{D}$ and let $\overline{B}=A\triangle B$ be the new block in $\overline{\mathcal{D}}$ obtained from $B$. Then:
	\begin{enumerate}[(i)]
		\item If $|B|>|A|$ (respectively $|B|<|A|$) then $\overline{B}$ is a large (respectively small) block in $\overline{\mathcal{D}}$.
		\item If $|B|=2\lambda$ then $|\overline{B}|=|A|$ in $\overline{\mathcal{D}}$.
		\item If $|B|=|A|$ then $|\overline{B}|=2\overline{\lambda}$ in $\overline{\mathcal{D}}$.
	\end{enumerate}
\end{thm}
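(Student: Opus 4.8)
The plan is to reduce all three items, together with the opening statement about $A$, to the single cardinality identity $|A\triangle B|=|A|+|B|-2|A\cap B|$, combined with the facts that any two distinct blocks of $\mathcal{D}$ meet in exactly $\lambda$ points and that, by Proposition~\ref{prop:complement-properties}(iv), $\overline{\lambda}=|A|-\lambda$. First I would record that $\overline{\mathcal{D}}$ really is a Ryser design: since $\mathcal{D}$ is of Type-2, its equivalence class contains no symmetric design (Theorem~\ref{thm:equivalence-2}), so $\overline{\mathcal{D}}=\mathcal{D}*A$ is a Ryser design of index $\overline{\lambda}=|A|-\lambda$, in which (by Lemma~\ref{lemma:blocksize}) a block is large, average or small according as its size is $>2\overline{\lambda}$, $=2\overline{\lambda}$ or $<2\overline{\lambda}$, where $2\overline{\lambda}=2|A|-2\lambda$. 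I also note that by Proposition~\ref{prop:complement-properties}(iii) the replication numbers, and hence $c$, $d$ and $a=c-d>0$, are unchanged under complementation.

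For the block $A$ itself: it remains a block of $\overline{\mathcal{D}}$ with the same size $|A|$, and since $2\overline{\lambda}=2|A|-2\lambda$ we have $|A|>2\overline{\lambda}$ if and only if $|A|<2\lambda$, and $|A|<2\overline{\lambda}$ if and only if $|A|>2\lambda$. Hence $A$ is small in $\overline{\mathcal{D}}$ precisely when it is large in $\mathcal{D}$, and large in $\overline{\mathcal{D}}$ precisely when it is small in $\mathcal{D}$. (In the parametrisation of Lemma~\ref{lemma:blocksize}: if $|A|=2\lambda+ta$ in $\mathcal{D}$ then $|A|=2\overline{\lambda}+(-t)a$ in $\overline{\mathcal{D}}$, so the sign of $t$ flips.)

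For a block $B\neq A$ of $\mathcal{D}$ I would use $|A\cap B|=\lambda$ to get $|\overline{B}|=|A\triangle B|=|A|+|B|-2\lambda$, and then compute the single quantity
\[
|\overline{B}|-2\overline{\lambda}=\bigl(|A|+|B|-2\lambda\bigr)-\bigl(2|A|-2\lambda\bigr)=|B|-|A|.
\]
Item~(i) is immediate: $|B|>|A|$ gives $|\overline{B}|>2\overline{\lambda}$, so $\overline{B}$ is large, and $|B|<|A|$ gives $|\overline{B}|<2\overline{\lambda}$, so $\overline{B}$ is small. For~(ii), $|B|=2\lambda$ gives $|\overline{B}|=|A|+2\lambda-2\lambda=|A|$. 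For~(iii), $|B|=|A|$ gives $|\overline{B}|=2|A|-2\lambda=2\overline{\lambda}$, i.e.\ $\overline{B}$ is an average block of $\overline{\mathcal{D}}$. (Equivalently, writing $|B|=2\lambda+sa$ with $B\neq A$, one finds $|\overline{B}|=2\overline{\lambda}+(s-t)a$, which recovers all three items at once since $a>0$.)

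These computations are entirely routine, so there is no genuine obstacle in the argument. The only place calling for a little care is the first step — knowing that $\overline{\mathcal{D}}$ is in fact a Ryser design rather than a symmetric design, so that its index $\overline{\lambda}$ and the trichotomy large/average/small are meaningful — and this is exactly where the Type-2 hypothesis enters, via Theorem~\ref{thm:equivalence-2} (or directly via Proposition~\ref{prop:complement-properties}(vii)).
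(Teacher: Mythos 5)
Your proposal is correct and follows essentially the same route as the paper: both reduce everything to the identity $|\overline{B}|=|A|+|B|-2\lambda=2\overline{\lambda}+|B|-|A|$ and the comparison of $|A|$ with $2\overline{\lambda}=2|A|-2\lambda$. Your added remark on why the Type-2 hypothesis guarantees $\overline{\mathcal{D}}$ is a genuine Ryser design is a small clarification the paper leaves implicit, but the argument is the same.
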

\begin{proof}
	Let $|A|=k=2\lambda+ta$ for some integer $t$. Then $A$ is a large block if $t>0$ and $A$ is a small block if $t<0$.
	If $k>2\lambda$ then $\overline{\lambda}>\lambda$ and $2\overline{\lambda}=2(k-\lambda)=k+(k-2\lambda)>k$. Hence if $A$ is a large block of $\mathcal{D}$ then it becomes a small block in $\overline{\mathcal{D}}$. The other case is similar.
	Let $B$ be any other block of $\mathcal{D}$ of size $ 2\lambda +t'a$. If $t'=0$ then, $B$ is an average block and if $t'\neq 0$ then, $B$ is a non average block.
	In the new design $\overline{\mathcal{D}}$ we have  $|\overline{B}|=|A\triangle B|=|A|+|B|-2|A\cap B|$.
	That is $|\overline{B}|=|A|+|B|-2\lambda=2(k-\lambda)+|B|-|A|=2\overline{\lambda}+|B|-|A|$.
	This completes the proof.
\end{proof}	

\begin{lemma}\label{lemma:v-1avarage_blocks}
	Let $\mathcal{D}$ be a Ryser design of order $v$ and index $\lambda$. If $\mathcal{D}$ has $v-1$ blocks of size $2\lambda$ then $\mathcal{D}$ is of Type-1.
\end{lemma}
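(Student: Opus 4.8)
The plan is to reduce everything to Theorem~\ref{thm:equivalence-2}: since that result says $\mathcal{D}$ is of Type-1 as soon as its $\sim$-equivalence class contains a symmetric design, it suffices to exhibit such a symmetric design. First I would note that, because $\mathcal{D}$ is a Ryser design it has at least two block sizes, so among its $v$ blocks the one that is not of size $2\lambda$ is uniquely determined; call it $A$. Then $|A|\neq 2\lambda$, i.e. $A$ is the unique non average block of $\mathcal{D}$ (were $A$ average as well, every block would have size $2\lambda$ and $\mathcal{D}$ would be a symmetric design, not a Ryser design). The natural candidate is then $\overline{\mathcal{D}}=\mathcal{D}*A$, the block complementation of $\mathcal{D}$ with respect to $A$, and the goal becomes to show that $\overline{\mathcal{D}}$ is forced to be a symmetric design.

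Next I would compute the block sizes of $\overline{\mathcal{D}}$, which has index $\overline{\lambda}=|A|-\lambda$. The block $A$ remains a block of $\overline{\mathcal{D}}$, of size $|A|$. Every other block $B$ of $\mathcal{D}$ has size $2\lambda$ by hypothesis, and the corresponding block of $\overline{\mathcal{D}}$ is $\overline{B}=A\triangle B$; since $|A\cap B|=\lambda$ we obtain $|\overline{B}|=|A|+|B|-2|A\cap B|=|A|+2\lambda-2\lambda=|A|$, which is precisely part (ii) of Theorem~\ref{thm:blocks_in_new_design}. Hence all $v$ blocks of $\overline{\mathcal{D}}$ have the common size $|A|$, so $\overline{\mathcal{D}}$ does not possess two distinct block sizes.

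Finally, by the Ryser-Woodall complementation the structure $\overline{\mathcal{D}}$ is either a symmetric design or a Ryser design of index $\overline{\lambda}$; but a Ryser design is required to have at least two distinct block sizes, which $\overline{\mathcal{D}}$ does not. Therefore $\overline{\mathcal{D}}$ is a symmetric design lying in the equivalence class of $\mathcal{D}$, and Theorem~\ref{thm:equivalence-2} gives that $\mathcal{D}$ is of Type-1. I do not expect a serious obstacle here, as the argument is a short direct verification; the only two points needing care are the opening observation that $A$ is genuinely non average (so that the hypothesis of a Ryser design is actually used), and the fact that one should not shortcut the proof by citing Remark~\ref{remark:1}, since the ``hence'' in that remark already presupposes exactly the statement of this lemma. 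The proof above is self-contained, relying only on the size formula for symmetric differences, the symmetric/Ryser dichotomy for complementation, and Theorem~\ref{thm:equivalence-2}.
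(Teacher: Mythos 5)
Your proof is correct and follows essentially the same route as the paper: complement $\mathcal{D}$ with respect to the unique non average block $A$, observe that every block of $\mathcal{D}*A$ then has size $|A|$ (since $|A\triangle B|=|A|+2\lambda-2|A\cap B|=|A|$), so $\mathcal{D}*A$ is a symmetric design and $\mathcal{D}$ is of Type-1. The only cosmetic difference is that you invoke Theorem~\ref{thm:equivalence-2} at the end where the paper concludes directly from the definition of Type-1; both are fine.
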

\begin{proof}
	Let $A$ be the only non-average block of $\mathcal{D}$ of size $k$. Consider $\overline{\mathcal{D}}=\mathcal{D}*A$. Then by Lemma \ref{lemma:blocksize} $\overline{\mathcal{D}}$ has all $v$ blocks of size $k$ with block intersection $k-\lambda$. Hence $\overline{\mathcal{D}}$ is a symmetric $(v,k,\lambda')$ design with $\lambda'=k-\lambda$. Therefore $\mathcal{D}$ is a Ryser design of Type-1.   	 
\end{proof}
\begin{thm}\label{thm:MainResult}
	Let $\mathcal{D}$ be a Ryser design of order $v$ and index $\lambda$ with two block sizes such that one block size is $2\lambda$. Then $ \mathcal{D}$ is of Type-1.
\end{thm}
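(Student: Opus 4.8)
\emph{Plan of proof.}
Write the two block sizes as $2\lambda$, occurring with multiplicity $n_1\ge 1$, and $k=2\lambda+ta$ with $t\neq 0$ (Lemma~\ref{lemma:blocksize}), occurring with multiplicity $n_2\ge 1$, so that $n_1+n_2=v$. Since a design with two block sizes is not symmetric we have $n_2\ge 1$, and by Lemma~\ref{lemma:v-1avarage_blocks} it is enough to prove $n_2=1$. So suppose for contradiction that $\mathcal{D}$ is of Type-2; then Remark~\ref{remark:1} forces $n_2\ge 2$. All non-average blocks share the common size $k$, hence are simultaneously all small (when $t<0$) or all large (when $t>0$), and this dichotomy is the backbone of the argument.

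First I would dispose of the case $t<0$, i.e.\ $k<2\lambda$. Complementing $\mathcal{D}$ with respect to any non-average block $B$ yields $\mathcal{D}*B$, a Ryser design of index $\overline{\lambda}=k-\lambda<\lambda$ which, by Theorem~\ref{thm:blocks_in_new_design}(ii),(iii), again has exactly two block sizes, namely $k$ (occurring $n_1+1$ times, now as the non-average size) and $2\overline{\lambda}=2(k-\lambda)$ (occurring $n_2-1\ge 1$ times, now as twice the index). Thus $\mathcal{D}*B$ satisfies the hypotheses of the theorem with strictly smaller index, so by induction on $\lambda$ (the base being the ranges of $\lambda$ for which the Ryser design conjecture is already known) it is of Type-1; hence $\mathcal{D}$ is of Type-1 by Theorem~\ref{thm:equivalence-2}, a contradiction. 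It remains to treat $t>0$, i.e.\ $k>2\lambda$, which is the main obstacle: now complementation by a non-average block \emph{raises} the index, so no descent of this kind is available.

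For the case $t>0$ the plan is to extract all the available counting information and then win by arithmetic. Equating the two expressions for $\sum_i k_i$ and using~\eqref{e1r1e2r2form} gives $n_2ta=r_1r_2-\lambda(v+1)$; counting point--block incidences separately over $E_1$ and $E_2$, with $\tau_1=\lambda-td$, $\tau_2=\lambda+tc$ from Lemma~\ref{lemma:blocksize}, gives $n_2td=\lambda-r_2(D+1)$ and $n_2tc=-(r_1D+\lambda)$, where $D=e_1-r_2$. Eliminating $n_2t$ between the last two relations pins $D$ down in terms of the parameters, namely $aD=\lambda(c+d)-cr_2$, and the positivity of $n_2$, $t$, $c$ then forces $D\le -1$. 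I would then play these relations against the integrality of $n_2t=\bigl(r_1r_2-\lambda(v+1)\bigr)/a$, the a priori bounds $2\le n_2\le v-1$, the inequalities $0\le\tau_1\le e_1$ and $0\le\tau_2\le e_2$ for the non-average blocks, and equation~\eqref{sum_of_all} specialised to the two values $k_m-\lambda\in\{\lambda,\ \lambda+ta\}$, in order to force $n_2=1$, the desired contradiction. The decisive squeeze should come from combining $D\le -1$ with~\eqref{sum_of_all}, very likely assisted by the sharper estimates on $r_1,r_2$ and on the number of large blocks established in the earlier parts of this series; closing this last step cleanly is where the real work lies.
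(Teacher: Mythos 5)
There is a genuine gap: your argument is a plan, not a proof, and it stalls exactly where the content lies. For $t>0$ you list a collection of counting relations ($n_2ta=r_1r_2-\lambda(v+1)$, the incidence counts over $E_1$ and $E_2$, the sign of $D$, equation~\eqref{sum_of_all}) and assert that a ``decisive squeeze'' combining them ``should'' force $n_2=1$, explicitly conceding that ``closing this last step cleanly is where the real work lies.'' That step is the theorem; without it nothing is proved. Moreover, your induction on $\lambda$ does not independently dispose of the case $t<0$: complementing a small non-average block sends a ($t<0$, index $\lambda$) design to a ($t>0$, index $k-\lambda$) design, so the inductive hypothesis you invoke at the smaller index is itself only available once the $t>0$ case has been settled there. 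The two cases toggle into one another under complementation, and the whole burden lands back on the unproven $t>0$ analysis (plus an unexamined base case at small $\lambda$).

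For comparison, the paper needs neither the sign split nor any induction. Writing $\alpha$ for the number of non-average blocks and $\beta=v-\alpha$ for the number of average ones, it combines~\eqref{sum_of_all} with the row-sum/column-sum identity~\eqref{e1r1e2r2form} (which gives $r_1r_2=(k-\lambda)\alpha+\lambda(\beta+1)$) to obtain a single quadratic
$P(\alpha)=\alpha^2(k-2\lambda)^2-\alpha(k-2\lambda)\bigl[v+(k-2\lambda)(v+1)\bigr]+(k-\lambda)v(v+1-4\lambda)=0$,
verifies $P(1)=P(v)=0$, and uses convexity ($P''=2(k-2\lambda)^2>0$) to rule out any root in $(1,v)$; since $\alpha=v$ would make the design symmetric, $\alpha=1$ and Lemma~\ref{lemma:v-1avarage_blocks} finishes. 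The ingredients you name are essentially the right ones — the paper's quadratic comes from exactly the two identities you cite — so your instinct that ``arithmetic wins'' is sound, but you would need to actually carry out the elimination and land on a closed-form constraint such as $P(\alpha)=0$; as written, the proposal does not establish the theorem.
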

\begin{proof}
	Let the Ryser design $\mathcal{D}$ have two block sizes $k$ and $2\lambda$ where $k\neq2\lambda$.
	Let there be $\alpha$ blocks of size $k$ and $\beta$ blocks of size 
	$2\lambda$, where $\alpha,\beta\geq 1.$ Then we have
	\begin{equation}\label{alpha_plus_beta_is_v}
	\alpha+\beta=v
	\end{equation}
	Using (\ref{sum_of_all}) we get $ 
	\dfrac{\alpha}{k-\lambda}+\dfrac{\beta}{\lambda}=\dfrac{(\rho 
		+1)^2}{\rho}-\dfrac{1}{\lambda}$. Hence we have\\
	\begin{equation}\label{two_blocks_sum_of_all}
	\frac{\alpha}{k-\lambda}+\frac{\beta+1}{\lambda}=\frac{(\rho 
		+1)^2}{\rho}=\dfrac{(v-1)^2}{(r_1-1)(r_2-1)}=\dfrac{(v-1)^2}{r_1r_2-v}
	\end{equation}
	Since the total row sum of the incidence structure of a Ryser design is equal to the total column sum, we have $e_1r_1+e_2r_2=k\alpha+2\lambda\beta$. 
	From equation (\ref{e1r1e2r2form}) we know that 
	$e_1r_1+e_2r_2=\lambda(v-1)+r_1r_2$.\\
	Hence $k\alpha+2\lambda\beta=\lambda(v-1)+r_1r_2$ which obtains,
	\begin{equation}\label{r1r2_is_k-lambdaalpha_plus_lambdabeta+1}
	r_1r_2=(k-\lambda)\alpha+\lambda(\beta+1)
	\end{equation}
	Now using (\ref{r1r2_is_k-lambdaalpha_plus_lambdabeta+1}) in 
	(\ref{two_blocks_sum_of_all}) we have    
	$\dfrac{\alpha}{k-\lambda}+\dfrac{\beta+1}{\lambda}=\dfrac{(v-1)^2}{
		(k-\lambda)\alpha+\lambda(\beta+1)-v}$. \\
	Simplification then yields,\\
	
	$\alpha^2(k-2\lambda)^2-\alpha(k-2\lambda)[v+(k-2\lambda)(v+1)]
	+(k-\lambda)v(v+1-4\lambda)=0$.\\
	Let
	\begin{equation}\label{quadraticequation} 	
	P(\alpha)= \alpha^2(k-2\lambda)^2-\alpha(k-2\lambda)[v+(k-2\lambda)(v+1)]
	+(k-\lambda)v(v+1-4\lambda).
	\end{equation}
	We claim that $\alpha=v$ and $\alpha=1$ are the only roots to the 
	quadratic $P(\alpha)$.
	
	\noindent
	We have, 
	$P(v)=v^2(k-2\lambda)^2-v(k-2\lambda)[v+(k-2\lambda)(v+1)]
	+(k-\lambda)v(v+1-4\lambda)$. Simplification  then obtains, 
	$P(v)=v[-k(k-1)+\lambda(v-1)]$
	which is zero. 
	If $\alpha=v$, then the Ryser design in consideration with two 
	block sizes becomes a symmetric design $(v,k,\lambda)$ and hence it satisfies the relation $k(k-1)=\lambda(v-1)$. \\ 
	Observer that,  
	$P(1)=(k-2\lambda)^2-(k-2\lambda)[v+(k-2\lambda)(v+1)]
	+(k-\lambda)v(v+1-4\lambda)$. Simplification  then obtains, 
	$P(1)=v[k(v-k)-\lambda(v-1)]$.
	By Lemma \ref{lemma:v-1avarage_blocks} we know that if $\alpha=1$, then the Ryser design in consideration with two 
	block sizes is of Type-1.
	Let this Ryser design be derived from a symmetric design 
	$(v,k,\lambda')$. Then $k(k-1)=\lambda'(v-1)$ and in the Ryser design 
	$\lambda=k-\lambda'$ therefore 
	$k^2-k=\lambda'v-\lambda'\Rightarrow  k^2=(k-\lambda)v+\lambda$ which 
	gives us $k(v-k)=\lambda(v-1)$.
	Therefore $\alpha=1$ is a root of the quadratic (\ref{quadraticequation}). 
	
	\noindent
	Successively differentiating quadratic (\ref{quadraticequation})
	obtains $P''(\alpha)=2(k-2\lambda)^2>0$ for all $ 1<\alpha<v$ because	$k\neq2\lambda$. Therefore $P(\alpha)$ does not change sign in the interval $(1,v)$ and 
	hence does not have any root in the interval $(1,v)$. 
	This proves that there is only one Ryser design with two block sizes in 
	which one of the block sizes is $2\lambda$.
\end{proof}
\noindent
\noindent
By Theorem \ref{thm:equivalence-1}  all the Ryser designs in the same equivalence class have same value of $r_1$ and $r_2$ and hence have same value of $\rho$.
Understanding the importance of the situation of two block
sizes with $2\lambda$ is helped by the following crucial hypothesis:\\
Hypothesis $\mathsf{H}$: Given any Ryser design $\mathcal{E}$, no design in the equivalence class  $(\mathcal{E})$ of  $\mathcal{E}$ have both a large and a small block, that is a large and a small block do not coexist in $\mathcal{E}$ or in any design equivalent to $\mathcal{E}$.
\begin{thm}
	Assuming the hypothesis $\mathsf{H}$ (to hold for all the Ryser designs),
	every Ryser design is of Type-1 and thus hypothesis $\mathsf{H}$ implies the validity
	of the Ryser design conjecture.
%
\end{thm}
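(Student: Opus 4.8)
The plan is to argue by contradiction. Assume $\mathsf{H}$ holds for all Ryser designs but that some Ryser design $\mathcal{D}$ is of Type-2. By Theorem~\ref{thm:equivalence-2}, every design in the equivalence class $(\mathcal{D})$ is then again a Type-2 Ryser design, and none of them is a symmetric design; in particular $\mathsf{H}$ may be applied to every design obtained from $\mathcal{D}$ by a sequence of block complementations. The only tool beyond this that I would use is the elementary size identity: if $A$ is a block of a Ryser design of index $\mu$ and $B\neq A$ is another block, then $\mathcal{D}*A$ has index $|A|-\mu$ and the block $A\triangle B$ has size $|A|+|B|-2\mu = 2(|A|-\mu)+\big(|B|-|A|\big)$, so $A\triangle B$ is large, average or small in $\mathcal{D}*A$ according as $|B|>|A|$, $|B|=|A|$ or $|B|<|A|$.

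The first step is to show that $\mathcal{D}$ has exactly two distinct block sizes. It has at least two, since it is not a symmetric design. Suppose it had distinct block sizes $k_1<k_2<\cdots<k_m$ with $m\geq 3$, and choose a block $A$ with $|A|=k_2$. A block of size $k_1$ and a block of size $k_m$ both exist and are necessarily different from $A$; by the size identity their images in $\mathcal{D}*A$ are respectively small and large. Thus $\mathcal{D}*A\in(\mathcal{D})$ has both a small block and a large block, contradicting $\mathsf{H}$. Hence $\mathcal{D}$ has exactly two block sizes, say $k_1<k_2$.

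The second step is to show that one of $k_1,k_2$ equals $2\lambda$. If $k_1<2\lambda<k_2$, then $\mathcal{D}$ itself has a small block and a large block, contradicting $\mathsf{H}$. If $2\lambda<k_1<k_2$, I would complement with respect to a block $A$ of size $k_1$: since $k_1>2\lambda$, the block $A$ is small in $\mathcal{D}*A$, while the image of a block of size $k_2$ is large there, again contradicting $\mathsf{H}$; the case $k_1<k_2<2\lambda$ is the mirror image, obtained by complementing with respect to a block of size $k_2$. So $2\lambda\in\{k_1,k_2\}$, and $\mathcal{D}$ is a Ryser design with two block sizes one of which is $2\lambda$. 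By Theorem~\ref{thm:MainResult} such a design is of Type-1, contradicting our choice of $\mathcal{D}$. This contradiction establishes that under $\mathsf{H}$ every Ryser design is of Type-1, i.e.\ that $\mathsf{H}$ implies the Ryser design conjecture.

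The one genuine idea here is the choice in the first step of a block of an \emph{intermediate} size to complement against: this is precisely what simultaneously creates a large block and a small block in an equivalent design, and so drives the number of block sizes down to two, after which Theorem~\ref{thm:MainResult} finishes the job. The remaining work is just short sign comparisons of $|A|+|B|-2\mu$ with $2(|A|-\mu)$, together with the bookkeeping needed to be sure that every design invoked really lies in $(\mathcal{D})$ and hence is a Ryser design to which $\mathsf{H}$ applies — a point where Theorem~\ref{thm:equivalence-2} is essential, since it is what rules out complementation accidentally landing on a symmetric design. I do not expect any serious obstacle beyond isolating that idea.
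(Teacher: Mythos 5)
Your proof is correct and follows essentially the same route as the paper: use hypothesis $\mathsf{H}$ together with the size identity $|A\triangle B|=2\overline{\lambda}+|B|-|A|$ (Theorem \ref{thm:blocks_in_new_design}) to force exactly two block sizes with one equal to $2\lambda$, then invoke Theorem \ref{thm:MainResult}. If anything, your write-up is slightly more careful than the paper's (the explicit case analysis in your second step, and the appeal to Theorem \ref{thm:equivalence-2} to guarantee that the complemented designs are again Type-2 Ryser designs to which $\mathsf{H}$ applies, are left implicit there).
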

\begin{proof}
	Let us assume that small and large blocks do not coexist in any equivalence class of a Ryser design.
	Consider a Ryser design $\mathcal{D}$ of order $v$ and index 
	$\lambda$ that does not have a small block. If $\mathcal{D}$ has at least two large blocks of different sizes say $B_1, B_2~~\text{ with } ~~|B_1|>|B_2|>2\lambda$ and if we complement this design with respect to block $B_2$. Then by Theorem \ref{thm:blocks_in_new_design} $~~$ we have a new design $\overline{\mathcal{D}}=\mathcal{D}*B_2$ in which $\overline{B_1}$ is a large block and $B_2$ is a small block, a contradiction. Therefore there can not exist two different block sizes of large blocks in the original design $\mathcal{D}$.
	Hence $\mathcal{D}$ is a Ryser design of order $v$ and index $\lambda$ with two block sizes with one block size $2\lambda$. Then by Theorem \ref{thm:MainResult} $ \mathcal{D}$ is of Type-1.
\end{proof}
\begin{corollary}
	Ryser design conjecture is equivalent to any one of the following statements:
	\begin{enumerate}[(i)]
		\item Large and small blocks do not coexist in any Ryser design.
		\item There are no two different large (respectively small) block sizes in any Ryser design.
		\item There are exactly two block sizes in any Ryser design with one block size average.
	\end{enumerate}
\end{corollary}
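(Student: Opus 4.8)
I would prove the corollary by establishing a short cycle of implications among the Ryser design conjecture and statements (i)--(iii), drawing entirely on results already in hand; the one structural fact used repeatedly is Remark~\ref{remark:1}, that a Type-1 Ryser design has exactly $v-1$ average blocks and a single non-average block.

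First I would show that the Ryser design conjecture implies all three statements at once. Assuming every Ryser design is of Type-1, a given Ryser design $\mathcal{D}$ has a unique non-average block $A$ by Remark~\ref{remark:1}; hence it has at most one large block and at most one small block, so these do not coexist, which is (i); it cannot have two distinct large sizes or two distinct small sizes, which is (ii); and its block sizes are precisely $2\lambda$ (with multiplicity $v-1$) and $|A|\neq 2\lambda$, i.e.\ exactly two block sizes one of which is average, which is (iii).

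Next I would close the loop with two quick citations and one short construction. The implication ``(iii) $\Rightarrow$ conjecture'' is immediate from Theorem~\ref{thm:MainResult}. For ``(i) $\Rightarrow$ conjecture'', I would note that the equivalence class of any Ryser design consists of Ryser designs possibly together with a symmetric design (Proposition~\ref{prop:complement-properties}(vii) and the description of $(\Omega)$), and a symmetric design has a single block size; so ``large and small blocks do not coexist in any Ryser design'' is exactly Hypothesis~$\mathsf{H}$ quantified over all Ryser designs, and the conclusion follows from the theorem immediately preceding this corollary. The remaining link is ``(ii) $\Rightarrow$ (i)'': suppose some Ryser design $\mathcal{D}$ had a large block $L$ and a small block $S$. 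Then $\mathcal{D}$ has at least two non-average blocks, hence is of Type-2 by Remark~\ref{remark:1}, so $\overline{\mathcal{D}}:=\mathcal{D}*S$ is again a Ryser design (were it symmetric, $\mathcal{D}$ would be of Type-1 by Theorem~\ref{thm:equivalence-2}), of index $\overline\lambda=|S|-\lambda$. By Theorem~\ref{thm:blocks_in_new_design}, $S$ is a large block of $\overline{\mathcal{D}}$, while $\overline L=S\triangle L$ has size $2\overline\lambda+|L|-|S|>2\overline\lambda$ and so is large as well; and $|S|=|\overline L|$ would force $|L|=2\lambda$, contrary to $L$ being large. Thus $\overline{\mathcal{D}}$ carries two distinct large block sizes, contradicting (ii); so (ii) implies (i), and with the earlier steps all of (i), (ii), (iii) are equivalent to the conjecture.

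I expect the only step needing genuine care to be ``(ii) $\Rightarrow$ (i)'': one must choose the complementing block correctly (with respect to the small block $S$), check via Theorem~\ref{thm:blocks_in_new_design} and Lemma~\ref{lemma:blocksize} that the two surviving blocks are both large \emph{and} of different sizes, and confirm that $\mathcal{D}*S$ is genuinely a Ryser design so that statement (ii) applies to it. Every other implication is a direct appeal to Remark~\ref{remark:1}, to Theorem~\ref{thm:MainResult}, or to the preceding Hypothesis~$\mathsf{H}$ theorem, and uses no computation beyond the identity $|A\triangle B|=|A|+|B|-2\lambda$ already recorded.
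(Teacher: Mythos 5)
Your proof is correct; the paper states this corollary without proof, and your argument supplies exactly the intended justification: Remark~\ref{remark:1} for the forward implications, Theorem~\ref{thm:MainResult} for (iii), and the identification of (i) with Hypothesis~$\mathsf{H}$ for the preceding theorem. The only step requiring a genuinely new argument is your (ii)$\Rightarrow$(i) complementation with respect to the small block $S$, and it checks out: $\mathcal{D}*S$ is a Ryser design since $\mathcal{D}$ has two non-average blocks, and the sizes $|S|$ and $|S|+|L|-2\lambda$ are two distinct large sizes in $\mathcal{D}*S$ precisely because $|L|\neq 2\lambda$.
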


\end{document}